\newcommand{\D}{\mathbb{D}}
\newcommand{\Z}{\mathbb{Z}}
\newcommand{\B}{B}
\DeclareMathOperator{\Mod}{Mod}
\newcommand{\p}[1]{\bigskip \noindent \emph{#1}.}
\theoremstyle{plain}
\newtheorem{theorem}{Theorem}
\newtheorem{proposition}{Proposition}
\theoremstyle{definition}
\newcommand{\nc}{\newcommand}
\nc{\dmo}{\DeclareMathOperator}
\nc{\para}[1]{\medskip\noindent\textbf{#1.}}
\title{The commutator subgroup of the braid group is generated by two elements}
\author{Kevin Kordek}
\address{Kevin Kordek \\ School of Mathematics\\ Georgia Institute of Technology \\ 686 Cherry St. \\ Atlanta, GA 30332}
\email{kevin.kordek@math.gatech.edu}
\thanks{The author was supported by National Science Foundation Grant No. DMS - 1057874.}
\begin{document}
\maketitle

\begin{abstract}
For $n$ at least 7 and $n$ equal to 5, we give generating sets of size 2 for the commutator subgroup of the braid group on $n$ strands. These generating sets are of the smallest possible cardinality. For $n$ equal to 4 or 6, we give generating sets of size three. We also prove that the commutator subgroup of the braid group on 4 strands cannot be generated by fewer than three elements. 
\end{abstract}

\section{Introduction}

Let $\sigma_1, \ldots, \sigma_{n-1}$ denote the standard Artin generating set for the braid group $\B_n$ on $n$ strands. Although the size of this generating set increases with $n$, for $n\geq 3$ there is another generating set for $\B_n$ consisting only of $\sigma_1$ and the periodic element
\[
\alpha = \sigma_1\cdots \sigma_{n-1}.
\]
This generating set is minimal when $n\geq 3$, since $\B_n$ is not cyclic for these values of $n$.

Let $\B_n'$ denote the commutator subgroup of $\B_n$. This group is trivial when $n =2$ and is non-trivial for all $n\geq 3$. The group $\B_3'$ is isomorphic to a free group of rank $2$ (see~\cite[p.6]{lin}), and so can also be generated by two elements. For $n\geq 4$, $\B_n'$ is no longer free and Gorin--Lin~\cite{gorinlin, lin} found a generating set for $\B_n'$ of size $n$. Chen, Margalit, and the author later pointed out~\cite{chenkordekmargalit} that it immediately follows from their presentation that $\B_n'$ is generated by $n-2$ elements when $n \geq 5$, namely the set of all elements $\sigma_1\sigma_i^{-1}$ with $2\leq i \leq n-1$. 

One might first expect that, by analogy with the situation for $\B_n$, there is a small generating set for $\B_n'$ consisting of a fixed, finite collection of elements of the form $\sigma_1\sigma_i^{-1}$ and periodic elements of $\B_n$. However, this is impossible because $\B_n'$ does not contain any periodic elements. Nevertheless, we construct generating sets for $\B_n'$ of size 2 that consist an element of the form $\sigma_1\sigma_i^{-1}$ and an element that behaves very much like a periodic braid. 

\begin{theorem}\label{thm:main}
Let $n=5$ or $n\geq 7$.  For any integer $2\leq k\leq n-2$ such that $\gcd(k,n) = 1$, the group $\B_n'$ is generated by the two elements
\[
\alpha^k\sigma_1^{-k(n-1)}\quad \text{and} \quad \sigma_1\sigma_{1+k}^{-1}.
\]

\end{theorem}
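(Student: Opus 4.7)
Both proposed generators lie in $\B_n'$: the abelianization $\B_n \to \Z$ sending each Artin generator to $1$ maps $\alpha^k\sigma_1^{-k(n-1)}$ to $k(n-1) - k(n-1) = 0$ and $\sigma_1\sigma_{1+k}^{-1}$ to $0$. Write $\beta = \alpha^k\sigma_1^{-k(n-1)}$, $\gamma = \sigma_1\sigma_{1+k}^{-1}$, and $N = \langle\beta,\gamma\rangle$. By the remark recalled in the introduction, $\B_n'$ is generated (for $n \geq 5$) by the elements $c_i := \sigma_1\sigma_i^{-1}$ with $2 \leq i \leq n-1$, so it suffices to show each such $c_i$ belongs to $N$.

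The plan is to produce the $c_i$ by iterated conjugation of $\gamma$ by $\beta$. The key observation is that $\beta$ acts by conjugation almost as a cyclic shift by $k$ on the Artin generators: the factor $\sigma_1^{-k(n-1)}$ commutes with $\sigma_j$ for every $j \in \{1,3,4,\ldots,n-1\}$, and $\alpha^k\sigma_j\alpha^{-k} = \sigma_{j+k}$ whenever $j+k \leq n-1$ (using the standard identity $\alpha\sigma_i\alpha^{-1} = \sigma_{i+1}$ for $1 \leq i \leq n-2$). Thus in the ``safe range'' one has $\beta\sigma_j\beta^{-1} = \sigma_{j+k}$, and an easy induction gives $\beta^m\gamma\beta^{-m} = \sigma_{1+mk}\sigma_{1+(m+1)k}^{-1}$ so long as the indices $1+\ell k$ for $\ell = 0,\ldots,m+1$ remain in that range. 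The telescoping product $\gamma\,(\beta\gamma\beta^{-1})\cdots(\beta^m\gamma\beta^{-m})$ then equals $c_{1+(m+1)k}$ and lies in $N$.

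Two obstructions can arise as $m$ grows: (i) an intermediate index becomes $2$, where $\sigma_1^{-k(n-1)}$ no longer commutes through $\sigma_2$; and (ii) an intermediate index exceeds $n-1$, so that $\alpha^k\sigma_j\alpha^{-k}$ is no longer a standard Artin generator. For (ii), the backward identity $\alpha^{-1}\sigma_\ell\alpha = \sigma_{\ell-1}$ (valid for $\ell \geq 2$) combined with $\alpha^n \in Z(\B_n)$ yields $\alpha^k\sigma_j\alpha^{-k} = \sigma_{j+k-n}$ once $j \geq n-k+1$. The precise correction exponent $-k(n-1)$ in $\beta$ is chosen so that, after a full orbit of $n$ conjugations, the commutator contributions from (i) exactly balance the wrap-around discrepancies from (ii), so that the action of $\beta$ on $\gamma$ cycles cleanly through $c_i$-type elements rather than drifting into non-Artin territory.

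The main obstacle is the combinatorial bookkeeping required to verify this cancellation at every intermediate step --- that is, showing that whatever ``error'' term is produced by either obstruction can be rewritten as a product of previously-obtained $c_i$'s and their $\beta$-conjugates, so the iteration can continue unhindered. Once this is carried out, the hypothesis $\gcd(k,n) = 1$ ensures that the orbit of $1$ under $i \mapsto i + k \pmod{n}$ visits every residue class, so every $c_i$ with $2 \leq i \leq n-1$ is realised as an explicit word in $\beta$ and $\gamma$, giving $N = \B_n'$.
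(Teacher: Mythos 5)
Your overall skeleton is the right one and matches the paper's: reduce to showing each $\sigma_1\sigma_i^{-1}$ lies in $N=\langle\beta,\gamma\rangle$, obtain the shifted differences $\sigma_{1+mk}\sigma_{1+(m+1)k}^{-1}$ as conjugates of $\gamma$, and telescope, using $\gcd(k,n)=1$ to hit every index. But the step you defer as ``combinatorial bookkeeping'' is precisely the mathematical content of the theorem, and the mechanism you sketch for it does not work. The identity $\beta^m\gamma\beta^{-m}=\sigma_{1+mk}\sigma_{1+(m+1)k}^{-1}$ fails in general: to push $\sigma_1^{-k(n-1)}$ through the conjugate obtained so far you need $\sigma_1$ to commute with both $\sigma_{1+mk}$ and $\sigma_{1+(m+1)k}$, and since $\gcd(k,n)=1$ the index $1+mk$ meets the residues $0$ and $2$ modulo $n$ before the orbit closes up, at which point the commutation fails. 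So your obstruction (i) is not an edge case but an unavoidable feature of every orbit, and the proposal offers no actual procedure for rewriting the resulting error terms. Moreover, the suggested explanation --- that the exponent $-k(n-1)$ is tuned so that errors of type (i) cancel against the wrap-around discrepancies of type (ii) --- is a misdiagnosis: that exponent is forced simply by the requirement that $\beta$ have trivial image in the abelianization (any exponent would serve the conjugation algebra equally well), and the wrap-around issue disappears entirely once one models $\B_n$ on the disk with the marked points arranged cyclically, where $\alpha\sigma_i\alpha^{-1}=\sigma_{i+1}$ holds for all $i\in\Z/n$ after introducing the extra half-twist $\sigma_0$.

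The paper's resolution is to conjugate not by powers of $\beta$ but by the shifted elements $r_m=\alpha^{km}\beta\alpha^{-km}=\alpha^k\sigma_{1+mk}^{-N}$ with $N=k(n-1)$: the twist factor of $r_m$ is based at the \emph{current} index $1+mk$, so it commutes with $s_m=\sigma_{1+mk}\sigma_{1+(m+1)k}^{-1}$ because $k\not\equiv\pm1\pmod n$ (this is exactly where the hypotheses $2\le k\le n-2$ and $n=5$ or $n\ge 7$ enter), giving $r_ms_mr_m^{-1}=s_{m+1}$ cleanly. The price is that one must also show $r_m\in N$, which is done by the companion identity $r_{m+1}=r_ms_m^N$ (using that $\sigma_{1+mk}^N\sigma_{1+(m+1)k}^{-N}=s_m^N$ by the same commutation); the two identities together close a joint induction on $m$. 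If you want to salvage your write-up, replace the iterated conjugation by $\beta$ with this pair of recursions and work with cyclic indices in the disk model; as it stands the proposal has a genuine gap at its central step.
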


The commutator subgroup $\B_n'$ is not a cyclic group for $n\geq 3$ (in particular, it is not cyclic for $n\geq 5$), and so the generating sets given by Theorem~\ref{thm:main} are of the smallest possible cardinality.

\medskip

 Our proof of Theorem~\ref{thm:main} breaks down when $n\in \{3,4,6\}$, as in these cases any integer coprime to $n$ is congruent to either $1$ or $-1\mod n$. That these are the only values of $n\geq 3$ for which this situation occurs follows from elementary number theory. As mentioned above, $\B_3'$ can be generated by two elements. We will show that both $\B_4'$ and $\B_6'$ admit generating sets of size 3, and that $\B_4'$ cannot be generated by fewer than three elements.  We do not know if $\B_6'$ can be generated by fewer than three elements.

 \begin{theorem}\label{thm2}
 \
 \begin{enumerate}
 \item  The group $\B_4'$ cannot be generated by fewer than three elements. There is a generating set for $\B_4'$ consisting of the three elements $\sigma_1\sigma_2^{-1},\sigma_1(\sigma_1\sigma_2^{-1})\sigma_1^{-1}$, and $\sigma_1\sigma_3^{-1}$.
  
  \vspace{.1in}
  
 \item The group $\B_6'$ is generated by the three elements $\sigma_1\sigma_2^{-1},\sigma_1\sigma_3^{-1}$, and $\alpha^2\sigma_1^{-10}$. 
 \end{enumerate}
 \end{theorem}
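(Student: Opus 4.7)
I would address the three claims in sequence. The upper bounds in (1) and (2) both reduce to previously-known generating sets for $\B_n'$ via Reidemeister--Schreier rewriting and explicit manipulations; the lower bound in (1) requires a separate module-theoretic argument, which I expect is the main obstacle.

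\textbf{Upper bound in (1).} Apply Reidemeister--Schreier to the projection $\B_4 \to \B_4/\B_4' \cong \Z$ using the Schreier transversal $\{\sigma_1^k : k \in \Z\}$. This yields a generating set for $\B_4'$ consisting of $d_k := \sigma_1^k \sigma_2 \sigma_1^{-(k+1)}$ for $k \in \Z$ together with $e := \sigma_3\sigma_1^{-1}$; the $\sigma_3$-generators all coincide because $\sigma_1$ and $\sigma_3$ commute. The braid relation $\sigma_1\sigma_2\sigma_1 = \sigma_2\sigma_1\sigma_2$, rewritten at the representative $\sigma_1^k$, becomes $d_{k+2} = d_k^{-1} d_{k+1}$, so inductively every $d_k$ lies in $\langle d_0, d_1\rangle$. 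Hence $\B_4' = \langle d_0, d_1, e\rangle$, and the three elements listed in the theorem are exactly the inverses $d_0^{-1}, d_1^{-1}, e^{-1}$.

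\textbf{Lower bound in (1).} Abelianizing the presentation above gives $\B_4'/\B_4'' \cong \Z^2$, which alone only implies $d(\B_4') \geq 2$. To reach $d(\B_4') \geq 3$, suppose for contradiction that $\B_4' = \langle g, h\rangle$. Then $\B_4''$ is the normal closure in $\B_4'$ of the single element $[g, h]$, and so $\B_4''/\B_4'''$ is cyclic as a module over $\Z[\B_4'/\B_4''] = \Z[\Z^2]$. My plan is to show that $\B_4''/\B_4'''$ is in fact \emph{not} cyclic over this ring; concretely, the second braid relation $\sigma_2\sigma_3\sigma_2 = \sigma_3\sigma_2\sigma_3$ rewrites at each coset representative as $ed_{k+2}e^{-1} = d_k^{-1} e d_{k+1}$, and modulo $\B_4'''$ these relations should produce two $\Z[\Z^2]$-linearly independent classes. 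Carrying out this module computation is the main obstacle.

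\textbf{Generation of $\B_6'$.} By the observation of Chen, Margalit, and the author recalled in the introduction, $\B_6'$ is generated by the four elements $\sigma_1\sigma_j^{-1}$ for $j \in \{2,3,4,5\}$. The first two are already among the proposed generators, so it suffices to produce $\sigma_1\sigma_4^{-1}$ and $\sigma_1\sigma_5^{-1}$ using $C := \alpha^2\sigma_1^{-10}$. Conjugation by $\alpha^2$ sends $\sigma_i \mapsto \sigma_{i+2}$ for $i \in \{1,2,3\}$, and since $\sigma_1$ commutes with $\sigma_3$, a direct calculation gives
\[
C(\sigma_1\sigma_3^{-1})C^{-1} = \alpha^2(\sigma_1\sigma_3^{-1})\alpha^{-2} = \sigma_3\sigma_5^{-1},
\]
whence $\sigma_1\sigma_5^{-1} = (\sigma_1\sigma_3^{-1})\cdot C(\sigma_1\sigma_3^{-1})C^{-1}$. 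For $\sigma_1\sigma_4^{-1}$, conjugating $\sigma_1\sigma_2^{-1}$ by $C$ yields $\sigma_3^{-9}\sigma_4^{-1}\sigma_3^{10}$, the extra $\sigma_3^{\pm}$ powers arising because the $\sigma_1^{-10}$ factor in $C$ becomes $\sigma_3^{-10}$ after the $\alpha^2$-shift. Further conjugating by $(\sigma_1\sigma_3^{-1})^{-10} = \sigma_3^{10}\sigma_1^{-10}$---whose $\sigma_1$-part commutes with everything in sight---cancels the excess $\sigma_3$'s and produces $\sigma_3\sigma_4^{-1}$; then $\sigma_1\sigma_4^{-1} = (\sigma_1\sigma_3^{-1})(\sigma_3\sigma_4^{-1})$. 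All four Chen--Kordek--Margalit generators therefore lie in $\langle \sigma_1\sigma_2^{-1}, \sigma_1\sigma_3^{-1}, C \rangle$, completing the proof.
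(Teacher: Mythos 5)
Your upper bound in (1) and your proof of (2) are both correct and essentially coincide with the paper's arguments: the Reidemeister--Schreier computation with transversal $\{\sigma_1^k\}$ recovers exactly the Gorin--Lin generators $u=\sigma_2\sigma_1^{-1}$, $v=\sigma_1\sigma_2\sigma_1^{-2}$, $c=\sigma_3\sigma_1^{-1}$ that the paper cites (with the relation $d_{k+2}=d_k^{-1}d_{k+1}$ playing the role of relation (1.16) and its consequences), and your computations $rbr^{-1}=\sigma_3\sigma_5^{-1}$ and $b^{-9}rar^{-1}b^{10}=\sigma_1\sigma_4^{-1}$ for $\B_6'$ are identical to the paper's.

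The genuine gap is the lower bound in (1). You reduce correctly to showing that $\B_4''/\B_4'''$ is not a cyclic $\Z[\Z^2]$-module (if $\B_4'=\langle g,h\rangle$ then $\B_4''$ is indeed the normal closure of $[g,h]$), but you do not carry out that computation and explicitly flag it as the main obstacle; as written there is no proof that $d(\B_4')\geq 3$, and it is not evident that the module computation would succeed. The paper avoids this entirely with a soft argument: if $\B_4'$ were $2$-generated there would be a surjection $\phi\colon F_2\to \B_4'$, and composing with the surjection $\psi\colon \B_4'\to \B_3'\cong F_2$ induced by the map $\B_4\to\B_3$ (sending $\sigma_1,\sigma_3\mapsto\sigma_1$ and $\sigma_2\mapsto\sigma_2$) gives a surjective endomorphism of $F_2$, which is an isomorphism because free groups are Hopfian. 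Surjectivity of $\phi$ then forces $\psi$ to be injective, contradicting the fact that $\sigma_1\sigma_3^{-1}$ is a nontrivial element of $\ker\psi$. You should replace your module-theoretic plan with this (or some other completed) argument; note that your own setup already contains the needed ingredient, since your generator $e=\sigma_3\sigma_1^{-1}$ visibly dies under $\psi$.
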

 
 We point out that the purported generators from Theorems~\ref{thm:main} and~\ref{thm2} do indeed lie in $\B_n'$, as their signed word lengths with respect to the $\sigma_i$ are equal to $0$. 

\p{Prior results} For $n\geq 4$, Gorin--Lin~\cite[p.7]{lin} found a finite presentation for $\B_n'$ with $n$ generators. Chen, Margalit, and the author~\cite[Lemma 8.6]{chenkordekmargalit} observed that only a subset of their generators were needed, and deduced that $\B_n'$ is generated by the $n-2$ elements $\sigma_1\sigma_i^{-1}$ with $2\leq i\leq n-1$. 

There are many results concerning minimal generating sets of mapping class groups of higher genus surfaces; we refer the reader to the paper by Baykur--Korkmaz~\cite{baykurkorkmaz} and the references therein. For $g\geq 3$, the genus $g$ mapping class group $\Mod(S_g)$ is a perfect group and hence equal to its own commutator subgroup. Thus for $g\geq 3$, every mapping class can be expressed as a product of commutators. Baykur--Korkmaz~\cite{baykurkorkmaz} have shown that $\Mod(S_g)$ is generated by two commutators for $g\geq 5$ and that it is generated by three commutators for $g\geq 3$. Since $\Mod(S_g)$ is not cyclic, their generating set is of the smallest possible cardinality when $g\geq 5$. 

The generating sets from Theorem~\ref{thm:main} are similar to the generating set found by Baykur--Korkmaz, (which consists of a product of commuting Dehn twists and a periodic element; see~\cite[p.2 ]{baykurkorkmaz}), in that they are of size 2 and have the property that one of the generators is a product of commuting half-twists and the other behaves much like a periodic element. Also, just as $\Mod(S_g)$ is perfect for $g\geq 3$, the group $\B_n'$ is perfect for $n\geq 5$. However, our result differs from theirs in that only the generator $\sigma_1\sigma_{1+k}^{-1}$ is known to be a commutator in $\B_n'$; we do not know if $\alpha^k\sigma_1^{-k(n-1)}$ is a commutator. 

\subsection*{Outline} After recalling some basic facts about braid groups and mapping class groups, we give the proofs of Theorem~\ref{thm:main} and Theorem~\ref{thm2} in Section~\ref{sec:proofs}. In the appendix, we give a self-contained proof that $\B_n'$ is generated by all elements of the form $\sigma_1\sigma_i^{-1}$ for $n\geq 5$. 

\subsection*{Acknowledgements} The author would like to thank Dan Margalit for suggesting the problem, for helpful conversations, and for numerous comments on early drafts of this paper. The author is grateful to the anonymous referee for their careful reading of the paper and for providing excellent comments that greatly enhanced its quality. 

\section{Proofs of the theorems}\label{sec:proofs}

Before we can prove Theorems~\ref{thm:main} and~\ref{thm2}, we require some setup. We will identify $\B_n$ with the mapping class group of the closed disk with $n$ marked points $\D_n$, which is the group of homotopy classes of homeomorphisms of $\D_n$ that fix the boundary pointwise and preserve the set of marked points. We take the marked points to be arranged in a circle centered at the origin in $\D_n$. They will be labeled with elements of $\Z/n$. 

For each $i\in \Z/n$ we define $c_i$ to be the simple closed curve pictured in Figure~\ref{fig} which surrounds the $i$th marked point and the $(i+1)$st marked point. Let $H_{c_i}$ denote the positive half-twist supported in the closed disk bounded by $c_i$. Then for $i\neq 0$ we have under our identification of $\B_n$ with $\Mod(\D_n)$ that 
\[
\sigma_i = H_{c_i}.
\]
For $i = 0$, we define $\sigma_0 = H_{c_0}$. 

Under the identification of $\B_n$ with $\Mod(\D_n)$, the element $\alpha$ corresponds to a counter-clockwise rotation of the interior of the disk by $2\pi/n$; see Figure~\ref{fig}. The rotation $\alpha$ satisfies $\alpha(c_i) = c_{i+1}$, and so we have for all $i$ and $m$ that

\[
\alpha^m\sigma_i\alpha^{-m} = \sigma_{i+m}. 
\]

The curves $c_i$ and $c_j$ are disjoint if and only if $i-j \neq \pm 1$ mod $n$, as the latter condition is equivalent to the statement that $c_i$ and $c_j$ surround distinct pairs of marked points. It follows that 
\[
\sigma_i\ \text{commutes with}\ \sigma_j\quad \Longleftrightarrow \quad i-j \neq \pm 1\mod n.
\]

\medskip

We are now ready to prove Theorems~\ref{thm:main} and~\ref{thm2}.

\begin{figure}
\labellist
\small\hair 2pt
\pinlabel $c_{n-1}$ at 410 510
\pinlabel $c_0$ at 210 510
\pinlabel $c_1$ at 70 380
\pinlabel $c_i$ at 410 90
\pinlabel $\alpha$ at 300 300
\endlabellist
\includegraphics[scale=.25]{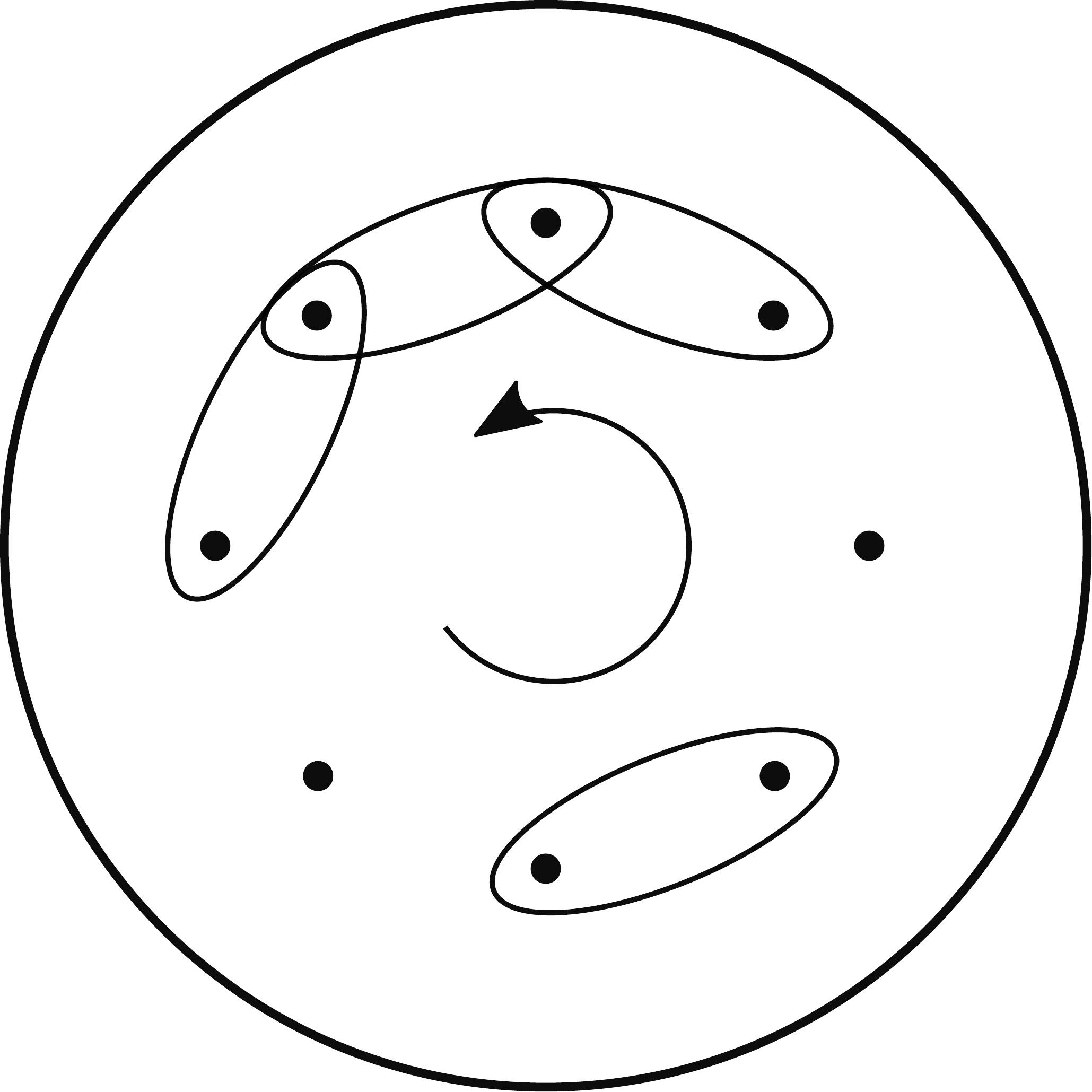}
\caption{The $n$ marked points in the disk $\D_n$; the rotation $\alpha$; the curves $c_i$.}
\label{fig}
\end{figure}

\begin{proof}[Proof of Theorem~\ref{thm:main}]
As we mentioned in the introduction, $\B_n'$ is generated by the elements $\sigma_1\sigma_i^{-1}$ with $2\leq i \leq n-1$. We will prove Theorem~\ref{thm:main} by showing that each of the generators $\sigma_1\sigma_i^{-1}$ is an element of the subgroup generated by $\sigma_1\sigma_{1+k}^{-1}$ and $\alpha^k\sigma_1^{-k(n-1)}$.

To this end, for all $i\geq 0$ we define auxiliary elements of $\B_n'$
\[
r_i = \alpha^k\sigma_{1+ik}^{-N} \qquad \qquad s_i = \sigma_{1+ik}\sigma_{1+(i+1)k}^{-1}
\]
where $N = k(n-1)$. Observe that $r_0=\alpha^k\sigma_1^{-N}$ and that $s_0=\sigma_1\sigma_{1+k}^{-1}$. We have that $r_i = \alpha^{ki}(r_0)\alpha^{-ki}$ and $s_i = \alpha^{ki}(s_0)\alpha^{-ki}$, and so it follows that $r_i$ and $s_i$ are indeed contained in $\B_n'$ for all $i\geq 0$. 

\medskip

Our proof has two steps:

\begin{enumerate}[leftmargin=*]
\item Prove that both $r_i$ and $s_i$ are contained in $\langle r_0, s_0\rangle$ for all $i\geq 0$.
\item Prove that $\sigma_1\sigma_i^{-1}$ can be written as a product of the $s_j$, and hence that it is an element of $\langle r_0, s_0\rangle$.
\end{enumerate} 

\p{Step 1}  We begin with the first step.  It is trivially true that $r_0$ and $s_0$ are contained in $\langle r_0, s_0\rangle$.
%
Assume now that there exists $m\geq 1$ such that $r_{\ell}$ and $s_{\ell}$ are both contained in $\langle r_0,s_0\rangle$ for all $\ell\leq m$. We will show that 
\[
r_{m+1} =r_ms_m^N \qquad \text{and} \qquad s_{m+1} = r_ms_mr_m^{-1}
\]
and hence that both are contained in $\langle r_0,s_0\rangle$. The claim will then follow by induction. 

\medskip

By the definition of $r_m$ and $s_m$ we have that
\begin{align*}
r_ms_mr_m^{-1} &= (\alpha^k\sigma_{1+mk}^{-N})(\sigma_{1+mk}\sigma_{1+(m+1)k}^{-1})(\sigma_{1+mk}^{N}\alpha^{-k})\\[.1in]
&=\alpha^k\left(\sigma_{1+mk}^{-N}\sigma_{1+mk}\sigma_{1+(m+1)k}^{-1}\sigma_{1+mk}^{N}\right)\alpha^{-k}.
\end{align*}
Since  
\[
(1+(m+1)k)- (1+mk) = k\neq \pm 1,
\]
\medskip
the elements $\sigma_{1+mk}$ and $\sigma_{1+(m+1)k}$ commute and hence
\[
r_ms_mr_m^{-1} = \alpha^k\left(\sigma_{1+mk}\sigma_{1+(m+1)k}^{-1}\right)\alpha^{-k} = \sigma_{1+(m+1)k}\sigma_{1+(m+2)k}^{-1} = s_{m+1},
\]
as desired. 

Since $r_{m+1} = \alpha^k\sigma_{1+(m+1)k}^{-N}$, we may write

\medskip

\begin{align*}
r_{m+1} = \alpha^k\sigma_{1+(m+1)k}^{-N} &= \alpha^k\left(\sigma_{1+mk}^{-N}\sigma_{1+mk}^N\right)\sigma_{1+(m+1)k}^{-N}\\[.1in]
& = \left(\alpha^k\sigma_{1+mk}^{-N}\right)\left(\sigma_{1+mk}^N\sigma_{1+(m+1)k}^{-N}\right).
\end{align*}

\medskip

\noindent Because $\sigma_{1+mk}$ commutes with $\sigma_{1+(m+1)k}^{-1}$, we further have that 

\medskip

\[
\sigma_{1+mk}^N\sigma_{1+(m+1)k}^{-N} = \left(\sigma_{1+mk}\sigma_{1+(m+1)k}^{-1}\right)^N
\]

\noindent and hence that
\begin{align*}
r_{m+1} = \left(\alpha^k\sigma_{1+mk}^{-N}\right)\left(\sigma_{1+mk}\sigma_{1+(m+1)k}^{-1}\right)^N = r_m s_m^N,
\end{align*}

\vspace{.1in}

\noindent also as desired. The claim follows, and Step 1 of the proof is complete. 

\p{Step 2} We now carry out the second step of the proof. Let $2\leq i\leq n-1$. Since $\gcd(k,n) =1$, there exists $\ell\geq 0$ such that $i-1 = k\ell \mod n$. That is, there exists $\ell \geq 0$ such that
\[
i = 1 + k\ell \quad \mod n.
\]

We claim that $\sigma_1\sigma_i^{-1} = s_0 s_1\cdots s_{\ell-1}$. Applying the definition of the $s_j$, we obtain that 
\[
s_0 s_1\cdots s_{\ell-1} = \prod_{j=0}^{\ell-1}\left(\sigma_{1+jk}\sigma_{1+(j+1)k}^{-1}\right)
\]
(here the products are formed from left to right). All of the terms of the product cancel except for the first factor and the last factor. Since the first factor is $\sigma_1$ and the last factor is $\sigma_{1+k\ell}^{-1} = \sigma_i^{-1}$, the product is equal to $\sigma_1\sigma_i^{-1}$, as claimed. Step 2 of the proof is therefore complete, and the theorem follows. 
\end{proof}

We now give the proof of Theorem~\ref{thm2}.
\begin{proof}[Proof of Theorem~\ref{thm2}]

We prove the two statements of the theorem in turn. We begin with the first statement and then prove the second statement.

\medskip 

We claim that $\B_4'$ is generated by the three elements from the first statement of the theorem. We will use the finite presentation for $\B_n'$ found by Gorin--Lin~\cite[p.7]{lin}. Their presentation implies that $\B_4'$ is generated by the four elements
\[
u =\sigma_2\sigma_1^{-1} \qquad v = \sigma_1\sigma_2\sigma_1^{-2} \qquad w =\sigma_2\sigma_3\sigma_1^{-1}\sigma_2^{-1} \qquad c = \sigma_3\sigma_1^{-1}.
\]
Their relation (1.16) states that $w = ucu^{-1}$, and so $\B_4'$ is generated by $u,v$ and $c$. It follows that $\B_4'$ is generated by $u^{-1} = \sigma_1\sigma_2^{-1}, v^{-1} = \sigma_1(\sigma_1\sigma_2^{-1})\sigma_1^{-1}$, and $c^{-1} = \sigma_1\sigma_3^{-1}$, as desired. 

We next claim that $\B_4'$ cannot be generated by two elements\footnote{We remark that Gorin--Lin~\cite[p.7]{lin} (see also~\cite[p.601]{gorinlin}) used the homomorphism $\psi$ to prove that $\B_4'$ is an extension of $F_2$ by $F_2$. This fact can also be used to prove the claim.}. Suppose to the contrary that $\B_4'$ can be generated by two elements. Then there exists a surjective homomorphism 
\[
\phi: F_2\rightarrow \B_4',
\]
where $F_2$ denotes the free group of rank 2. There is also a surjective homomorphism 
\[
\psi: \B_4'\rightarrow \B_3'\cong F_2
\]
obtained by restricting the surjective homomorphism $\B_4\rightarrow \B_3$ defined by sending $\sigma_1$ and $\sigma_3$ to $\sigma_2$ to $\sigma_2$. Since both $\phi$ and $\psi$ are surjective, the composition 
\[
\psi\circ \phi: F_2\rightarrow \B_3' \cong F_2
\] 
is surjective, and hence, by the Hopfian property of free groups, an isomorphism. The fact that $\phi$ is surjective now implies that $\psi$ is injective (and hence an isomorphism). On the other hand, $\sigma_1\sigma_3^{-1}$ is contained in the kernel of $\psi$, essentially by definition. This contradiction shows that $\B_4'$ cannot be generated by two elements, as claimed. This completes the proof of the first statement of the theorem.

\medskip

We now prove the second statement of the theorem. We define
\[
a = \sigma_1\sigma_2^{-1} \qquad b = \sigma_1\sigma_3^{-1}\qquad \text{and}\qquad r = \alpha^2\sigma_1^{-10}.
\]
It suffices to show that the subgroup $\langle a,b,r\rangle$ generated by $a,b$ and $r$ contains all elements of the form $\sigma_1\sigma_i^{-1}$ with $1\leq i \leq 5$. As this subgroup contains the elements $\sigma_1\sigma_2^{-1}$ and $\sigma_1\sigma_3^{-1}$ by definition, it remains only to show that it also contains $\sigma_1\sigma_4^{-1}$ and $\sigma_1\sigma_5^{-1}$. 

We claim that $\sigma_1\sigma_5^{-1}\in \langle a,b,r\rangle$. We calculate that $rbr^{-1} = \sigma_3\sigma_5^{-1}$, and so we have
\[
\sigma_1\sigma_5^{-1} = (\sigma_1\sigma_3^{-1})(\sigma_3\sigma_5^{-1}) = brbr^{-1}.
\]
The claim follows. 

We next claim that $\sigma_1\sigma_4^{-1}\in \langle a,b,r\rangle$. We will prove that 
\[
\sigma_1\sigma_4^{-1} = b^{-9}rar^{-1}b^{10},
\]
and the claim will follow. We calculate that
\begin{align*}
rar^{-1}  = (\alpha^2\sigma_1^{-10})\sigma_1\sigma_2^{-1}(\alpha^2\sigma_1^{-10})^{-1} &= \alpha^2\left(\sigma_1^{-9}\sigma_2^{-1}\sigma_1^{10}\right)\alpha^{-2}\\[.1in]
& = \sigma_3^{-9}\sigma_4^{-1}\sigma_3^{10},
\end{align*}
where in the last equality we have used that $\alpha^2\sigma_i\alpha^{-2} = \sigma_{i+2}$. Since $\sigma_1$ commutes with both $\sigma_3$ and $\sigma_4$, we further have that
\begin{align*}
\sigma_3^{-9}\sigma_4^{-1}\sigma_3^{10} &= \sigma_1^{10}\left(\sigma_3^{-9}\sigma_4^{-1}\sigma_3^{10}\right)\sigma_1^{-10}\\[.1in]
& = (\sigma_1\sigma_3^{-1})^9\left(\sigma_1\sigma_4^{-1}\right)(\sigma_3\sigma_1^{-1})^{10}\\[.1in]
& = b^9(\sigma_1\sigma_4^{-1})b^{-10}.
\end{align*}

\vspace{.1in}

\noindent It now follows that 
\[
rar^{-1}  = b^9(\sigma_1\sigma_4^{-1})b^{-10}
\]

\vspace{.1in}

\noindent and hence that
\[
b^{-9}rar^{-1}b^{10} = \sigma_1\sigma_4^{-1},
\]

\vspace{.1in}

\noindent as desired. This completes the proof of the theorem. 
\end{proof}

\section{Appendix: Another generating set for the commutator subgroup}

In this appendix, we give a self-contained proof of the following.

\begin{proposition}
Let $n\geq 5$. The commutator subgroup $\B_n'$ is generated by the elements $\sigma_1\sigma_i^{-1}$ with $2\leq i\leq n-1$. 
\end{proposition}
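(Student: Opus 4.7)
The plan is to compute an explicit generating set for $\B_n'$ by Reidemeister--Schreier and then verify that every generator lies in $G := \<\sigma_1\sigma_i^{-1} : 2 \leq i \leq n-1\>$. The reverse inclusion $G \subseteq \B_n'$ is immediate because each $\sigma_1\sigma_i^{-1}$ has zero exponent sum.

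The abelianization map $\phi : \B_n \to \Z$ sending each $\sigma_i$ to $1$ has kernel $\B_n'$, and the powers $\{\sigma_1^k : k \in \Z\}$ form a Schreier transversal. Reidemeister--Schreier thus exhibits $\B_n'$ as generated by the elements $\sigma_1^k\sigma_i\sigma_1^{-k-1}$ for $k \in \Z$ and $1 \leq i \leq n-1$. These are trivial for $i = 1$; for $i \geq 3$, because $\sigma_1$ and $\sigma_i$ commute, they collapse to $v_i := \sigma_i\sigma_1^{-1} = (\sigma_1\sigma_i^{-1})^{-1}$, which is independent of $k$ and manifestly in $G$. The only remaining generators are the one-parameter family $u_k := \sigma_1^k\sigma_2\sigma_1^{-k-1}$, and the proposition reduces to showing $u_k \in G$ for every $k \in \Z$.

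I would first use the braid relation $\sigma_1\sigma_2\sigma_1 = \sigma_2\sigma_1\sigma_2$ to derive the single identity $u_0 u_2 = u_1$ (a short calculation: $u_0 u_2 = \sigma_2\sigma_1\sigma_2\sigma_1^{-3} = \sigma_1\sigma_2\sigma_1^{-2} = u_1$). Conjugating by $\sigma_1^m$ (which merely shifts indices, since $\sigma_1^m u_k\sigma_1^{-m} = u_{k+m}$) produces the recursion $u_m u_{m+2} = u_{m+1}$ for every $m \in \Z$, and a straightforward induction in both directions expresses each $u_k$ as a word in $u_0$ and $u_1$. Since $u_0 = (\sigma_1\sigma_2^{-1})^{-1}$ is already in $G$, it suffices to place $u_1$ in $G$.

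This is where the hypothesis $n \geq 5$ enters: the generator $\sigma_4$ is then available, and because $|4 - 2| \geq 2$ we have $\sigma_2\sigma_4 = \sigma_4\sigma_2$. Substituting $\sigma_2 = u_0\sigma_1$ and $\sigma_4 = v_4\sigma_1$ into this commutation and collecting $\sigma_1$-powers translates it directly into the identity $u_0 v_4 = v_4 u_1$, equivalently $u_1 = v_4^{-1} u_0 v_4$. Since both $u_0$ and $v_4$ lie in $G$, so does $u_1$, and the recursion then forces every $u_k \in G$. The only substantive calculations are the two short rewritings $u_0 u_2 = u_1$ and $u_0 v_4 = v_4 u_1$; the conceptual heart of the proof is that the commuting pair $(\sigma_2, \sigma_4)$ becomes available precisely when $n \geq 5$, which is also consistent with the failure of the statement at $n = 4$ noted in Theorem~\ref{thm2}.
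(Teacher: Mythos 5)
Your proof is correct, and it takes a genuinely different route from the paper's. You run Reidemeister--Schreier on the abelianization $\B_n \to \Z$ with transversal $\{\sigma_1^k\}$, which collapses everything to the generators $v_i = \sigma_i\sigma_1^{-1}$ ($i \geq 3$) plus the single one-parameter family $u_k = \sigma_1^k\sigma_2\sigma_1^{-k-1}$; the braid relation gives the recursion $u_mu_{m+2}=u_{m+1}$ reducing the family to $u_0, u_1$, and the commutation $\sigma_2\sigma_4 = \sigma_4\sigma_2$ (this is exactly where $n\geq 5$ enters) gives $u_1 = v_4^{-1}u_0v_4$. I checked the two identities $u_0u_2 = u_1$ and $u_0v_4 = v_4u_1$ and they hold. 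The paper instead takes as input Lin's result that $\B_n'$ is the normal closure of $\sigma_1\sigma_2^{-1}$ (equivalently $\sigma_1\sigma_3^{-1}$) for $n\geq 5$, and then shows by induction on the word length of $g$ that every conjugate $g(\sigma_a\sigma_b^{-1})g^{-1}$ is a product of elements $\sigma_i\sigma_j^{-1}$; the inductive step needs, for each pair $\sigma_i,\sigma_j$, a generator commuting with both, which fails for the pair $(\sigma_2,\sigma_3)$ when $n=5$ and forces the paper into a separate argument involving the extra half-twist $\sigma_0 = \alpha\sigma_{n-1}\alpha^{-1}$. Your approach buys two things: it does not rely on the external normal-closure fact (it is self-contained modulo the standard Reidemeister--Schreier generating-set lemma), and it treats $n=5$ and $n\geq 6$ uniformly, since all you need is the existence of $\sigma_4$. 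What the paper's approach buys is the intermediate statement that $\B_n'$ is generated by all $\sigma_i\sigma_j^{-1}$, and an argument phrased entirely in terms of commutation patterns of half-twists, consistent with the geometric viewpoint used elsewhere in the paper.
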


As mentioned above, this generating set can also be found be examining the generators and relations in the presentation for $\B_n'$ found by Gorin--Lin~\cite[p.7]{lin}. Our proof will use only the fact that $\B_n'$ is the normal closure in $\B_n$ of either of the elements $\sigma_1\sigma_3^{-1}$ and $\sigma_1\sigma_2^{-1}$ provided $n\geq 5$ (see \cite[Remark 1.10]{lin}).

\begin{proof}

We first prove the proposition for $n \geq 6$. To do this, we will demonstrate the apparently weaker fact that $\B_n'$ is generated by the set of all elements of the form $\sigma_i\sigma_j^{-1}$ with $1\leq i,j\leq n-1$ and $i\neq j$ (here we will regard $i$ and $j$ as elements of the subset $\{1,2,\ldots, n-1\}$ of the integers). The proposition will then follow because any such element can be written as a product of elements of the form $\sigma_1\sigma_i^{-1}$ and their inverses. Indeed, we have for all $i$ and $j$ that 
\[
\sigma_i\sigma_j^{-1} = \sigma_i\sigma_1^{-1}\sigma_1\sigma_j^{-1} = (\sigma_1\sigma_i^{-1})^{-1}(\sigma_1\sigma_j^{-1}).
\]

Since any element of the form $\sigma_i\sigma_j^{-1}$ is conjugate to either $\sigma_1\sigma_2^{-1}$ or $\sigma_1\sigma_3^{-1}$, the group $\B_n'$ is  generated in $\B_n$ by any element of the form $\sigma_a\sigma_b^{-1}$. To prove the proposition, it therefore suffices to prove that any element of the form
\[
g(\sigma_a\sigma_b^{-1})g^{-1}
\]
can be written as a product of elements of the form $\sigma_i\sigma_j^{-1}$. We will prove this by inducting on the length of $g$ with respect to the generators $\sigma_1,\ldots, \sigma_{n-1}$. We say that an element has length $N\geq 0$ if it can be written as a product of $N$ elements of the form $\sigma_i^{\pm 1}$. Any element that is not the identity has positive length. 

\medskip

There is nothing to prove if $g$ has length zero, as the only element of length 0 is the identity. For the base case of the induction, we will prove that any element of the form $\sigma_k^{\epsilon}(\sigma_i\sigma_j^{-1})\sigma_k^{-\epsilon}$ with $\epsilon = \pm 1$ can be expressed in the desired form. 

We begin the proof of the base case with the claim that given any two generators $\sigma_i$ and $\sigma_j$, there exists a generator $\sigma_\ell$ commuting with both. If $j-i\geq 2$ we can take $\ell = i$ since $\sigma_i$ commutes with itself and with $\sigma_j$. Now assume that $j=i+1$. Since $n\geq 6$, there exists $\ell$ such that either $\ell < i$ and $i-\ell \geq 2$ or $\ell > i+ 1$ and $\ell- (i+1) \geq 2$. In either case, $\sigma_{\ell}$ commutes with both $\sigma_i$ and $\sigma_{i+1}$. This proves the claim.

Assume that $\epsilon = 1$. By the preceding claim, there exists $\sigma_{\ell}$ that commutes with both $\sigma_i$ and $\sigma_j$. We then have that
\[
\sigma_k\sigma_i\sigma_j^{-1}\sigma_k^{-1} = (\sigma_k\sigma_{\ell}^{-1})(\sigma_i\sigma_j^{-1})(\sigma_{\ell}\sigma_k^{-1}).
\]
Next, assume that $\epsilon = -1$. Again, by the above claim there exists a generator $\sigma_{\ell}$ that commutes with both $\sigma_k$ and $\sigma_i$ and a generator $\sigma_m$ that commutes with both $\sigma_j$ and $\sigma_k$. We then have that
\[
\sigma_k^{-1}\sigma_i\sigma_j^{-1}\sigma_k = (\sigma_k^{-1}\sigma_i)(\sigma_j^{-1}\sigma_k)= \left((\sigma_\ell\sigma_{k}^{-1})(\sigma_i\sigma_\ell^{-1})\right)\left((\sigma_m\sigma_{j}^{-1})(\sigma_k^{1}\sigma_m^{-1})\right)
\]
This establishes the base case. 

We now assume that there exists $N\geq 1$ such that for any element $g\in \B_n$ of length $\leq N$, and any element of the form $\sigma_a\sigma_b^{-1}$ with $1\leq a,b\leq n-1$ there exists a factorization of the form 
\[
g\left(\sigma_a\sigma_b^{-1}\right)g^{-1} = \prod \sigma_A\sigma_B^{-1}
\]
where the product ranges over a finite collection of elements of the form $\sigma_A\sigma_B^{-1}$ such that $1\leq A,B\leq n-1$. (As in the previous section, products are formed from left to right.)

Let $g$ be a word of length $N+1$. Then there exists a generator $\sigma_i$ and a word $h$ of length $N$ so that either $g = \sigma_i h$ or $g = \sigma_i^{-1} h$. The result now follows by induction, since for any indices $a,b$ we have by induction that 
\begin{align*}
g(\sigma_a\sigma_b^{-1})g^{-1} = (\sigma_i^{\pm 1} h)(\sigma_a\sigma_b^{-1})(\sigma_i^{\pm 1} h)^{-1} &= \sigma_i^{\pm 1}\left(h(\sigma_a\sigma_b^{-1})h^{-1}\right)\sigma_i^{\mp 1}\\
&= \sigma_i^{\pm 1}\left(\prod (\sigma_A\sigma_B^{-1})\right){\sigma_i^{\mp 1}} \\
& = \prod \sigma_i^{\pm 1}(\sigma_A\sigma_B^{-1})\sigma_i^{\mp 1}\\
& = \prod\left( \prod \sigma_{A'}\sigma_{B'}^{-1}\right),
\end{align*}
where we have used in the last equality the fact that each $\sigma_i^{\pm 1}(\sigma_A\sigma_B^{-1})\sigma_i^{\mp 1}$ can be expanded as a product of elements of the form $\sigma_{A'}\sigma_{B'}^{-1}$ with $1\leq A',B' \leq n-1$. This completes the proof of the proposition for $n\geq 6$.

\medskip

For the $n=5$ case of the proposition, a proof essentially identical to the one given for $n\geq 6$ shows that $\B_n'$ is generated by the set of all elements of the form $\sigma_i\sigma_j^{-1}$ where now the indices $i$ and $j$ are allowed to assume the value $0$ (we recall that $\sigma_0 = \alpha\sigma_4\alpha^{-1}$). This difference from the $n\geq 6$ arises because, while none of the generators $\sigma_i$ with $1\leq i\leq 4$ commutes with both $\sigma_2$ and $\sigma_3$, the half-twist $\sigma_0$ does commute with both. 

To complete the proof, all that remains to be shown is that any element of the form $\sigma_i\sigma_0^{-1}$ or $\sigma_0\sigma_i^{-1}$ can be written as a product of elements of the form $\sigma_1\sigma_j^{-1}$ and their inverses. It suffices to do this for the $\sigma_i\sigma_0^{-1}$. Much as in the $n\geq 6$ case, we compute that
\[
\sigma_i\sigma_0^{-1} =\left( \sigma_i\sigma_1^{-1}\right)\left(\sigma_1\sigma_0^{-1}\right).
\]

We claim that $\sigma_1\sigma_0^{-1}$ can be written as a product of elements of the form $\sigma_i\sigma_j^{-1}$ with $i,j\neq 0$. Recalling that $\alpha = \sigma_1\sigma_2\sigma_3\sigma_4$, we calculate that
\[
\sigma_1\sigma_0^{-1} = (\sigma_1^2\sigma_2)(\sigma_3\sigma_4^{-1})(\sigma_3^{-1}\sigma_2^{-1}\sigma_1^{-1}) = (\sigma_1^2\sigma_2\sigma_1^{-3})(\sigma_3\sigma_4^{-1})(\sigma_1^3\sigma_3^{-1}\sigma_2^{-1}\sigma_1^{-1}),
\]
where we have used that the element $\sigma_1^3$ commutes with $\sigma_3\sigma_4^{-1}$. The middle term on the right side of second equality is of the required form, so it remains only to show that both the left-most term and the right-most term can be factored in the desired way. We have that
\[
\sigma_1^2\sigma_2\sigma_1^{-3} = (\sigma_1\sigma_4^{-1})^2(\sigma_2\sigma_4^{-1})(\sigma_4\sigma_1^{-1})^3
\]
and that
\[
\sigma_1^3\sigma_3^{-1}\sigma_2^{-1}\sigma_1^{-1} = (\sigma_1\sigma_3^{-1})(\sigma_1\sigma_4^{-1})^2(\sigma_4\sigma_2^{-1})(\sigma_4\sigma_1^{-1})
\]
as desired. The claim follows, and the proof of the proposition is complete.

%

\end{proof}

\bibliographystyle{plain}
\bibliography{smallgen}

\end{document}